\newtheorem{definition}{Definition}
\newtheorem*{definition*}{Definition}
\newtheorem{theorem}{Theorem}
\newtheorem{prop}[theorem]{Proposition}
\newtheorem*{theorem*}{Theorem}
\newtheorem*{prop*}{Proposition}
\newtheorem*{lemma*}{Lemma}
\newtheorem*{algorithm*}{Algorithm}
\begin{document}

\title{%
A Survey of Adwords Problem With Small Bids In a Primal-dual Setting\\
\large Greedy Algorithm, Ranking Algorithm and Primal-dual Training-based Algorithm}
\author{Haoqian Li}
\date{\today}

\maketitle
\begin{abstract}
The Adwords problem has always been an interesting internet advertising problem. There are many ways to solve the Adwords problem with adversarial order model, including the Greedy Algorithm, the Balance Algorithm and the Scale-bid Algorithm, which is also known as MSVV. In this survey, I will review the Adwords problem with different input models under a primal-dual setting, and with the small-bid assumption. In the first section, I will focus on Adwords with adversarial order model, and use duality to prove the efficiency of the Greedy Algorithm and the MSVV algorithm. Next, I will look at a primal-dual training-based algorithm for the Adwords problem with the IID model. 
\end{abstract}
\section{Introduction}
Nowadays, almost all of the technology companies, such as Google and Yahoo, need to face a combinatorial problem of assigning user queries to advertisers to maximize the total revenue. Specifically, a search engine might receive offers from $n$ different companies: each month, company $u$ wants to pay at most $B_u$ dollars in advertising; Company $u$ is willing to pay $w_{uv}$ if its ad appears when word $v$ is searched. The goal of the search engine is to maximize the profit of the search engine, which equals to the total amount of money spent by advertisers. \\
\subsection{Combinatorial Formulation} Mathematically, we are given a set of $n$ nodes $U$, with each $u\in U$ having budget $B_u$. At each time $i=1,\ldots, t$: A node $v_i$ arrives, bids $w_{uv_i}$ are displayed, and we must decide either to match $v_i$ to $u\in U$ and earn min($w_{uv_i}, RB_u$), or not to match $v_i$ to any node (i.e., no ad is shown when $v_i$ is searched). $RB_u$ stands for the remaining budget for node $u$. We aim to maximize the total amount of money spent by advertiser.
\subsection{Duality Review}
Before I dive into the IP formulation of the problem, I first give a quick review of primal-dual formulation and duality theorem.\\
For every LP in the form \begin{align*}\text{max } &c^Tx\\
\text{s.t. } &Ax\leqslant b\\
&x\geqslant 0,
\end{align*}call this the primal problem, there always exist a dual problem associate to it in the following form 
\begin{align*}\text{min } &b^Ty\\
\text{s.t. } &A^Ty\geqslant c\\
&y\geqslant 0.
\end{align*}
We have also seen in class that $c^Tx\leqslant b^Ty$ for any feasible solution of the primal $x$, and any feasible solution of the dual $y$. In other words, the value of any feasible solution to the dual yields an upper bound on the value of any feasible solution to the primal. In addition, if $\bar{x}$ is the optimal solution to the primal and $\bar{y}$ is the optimal solution to the dual, we have $c^T\bar{x}=b^T\bar{y}$ (the costs/object value coincide). \\

Now, what remains to be crucially important in this paper is the complementary slackness: Let $s=b-Ax\geqslant 0, w=A^Ty-c \geqslant 0$ be the nonnegative primal and dual slacks. Then $b^T \bar{y}=c^T\bar{x}$ (obtain optimal solution), iff $y^Ts=0$ and $w^Tx=0$. In other words, if $\bar{x_i}>0$ for some $i$, then the corresponding dual slack is 0 (constraint is tight), i.e., $A_i^T\bar{y}=c_i$, and if the primal slack is nonzero (constraint not tight), i.e., $A_i^T\bar{x}<b_i$, then corresponding dual variable $\bar{y_i}=0$. Similar logic follows for dual variable and dual slack.
\subsection{Primal-dual Formulation}Buchbinder et al.\cite{Buch} studied the Adwords problem with the small bids assumption using the Primal-Dual approach. For now we assume the Adwords problem is offline, which means that the complete bipartite graph is available to us and we can solve the optimal solution. We first look at the primal problem. The formulation is as follows:\\
\begin{align*} \text{max} \displaystyle\sum_{u,v} x_{uv} w_{uv} &    &\text{max  } &w^T x \\
s.t. \forall u: \displaystyle\sum_v x_{uv} w_{uv} & \leqslant B_u   &s.t. &Ax\leqslant b\\
\forall v: \displaystyle\sum_u x_{uv} &\leqslant 1 &A=
 \begin{bmatrix}
  w^T\\
  e^T \\
 \end{bmatrix}   &b=
 \begin{bmatrix}
  B_u\\
  e \\
 \end{bmatrix}  
\\
x_{uv}&\geqslant 0
\end{align*}
Note $x_{uv}=1$ if we match $v\in V$ to $u\in U$ and earn $w_{uv}$, and $x_{uv}=0$ otherwise. The first constraint states that the total amount company $u$ pays cannot exceed its budget, and the second constraint says that each person can only be shown to at most one ad. The solution could be fractional but rounding-off lose very little in the object value.\\
We can construct the dual problem:\\
\begin{align*}
\text{min} \displaystyle\sum_{u} B_u\alpha_u+ \displaystyle\sum_{v} \beta_v&  &\text{min  } &b^T y\\
s.t. \forall u,v: w_{uv} \alpha_u+\beta_v &\geqslant w_{uv}   &s.t. &A^T y\geqslant w\\
\alpha_u&\geqslant 0 \text{ } \forall u\\
\beta_v&\geqslant 0 \text{ } \forall v
\end{align*}
\textbf{Remark}: by complementary slackness, we know that (1): If $x_i>0$ for some i, then the corresponding dual constraint is satisfied at equality, i.e., $A_i^T \bar{y}=w_i$; (2): If $y_i>0$ for some $i$, then the corresponding primal constraint is satisfied at equality, i.e., $A_i \bar{x}=b_i$; (3) If $A_i\bar{x}<b_i$, then $y_i=0$. Therefore the optimal primal and dual solutions $\{x_{uv},\alpha_u,\beta_v\}$ must satisfy the following conditions:\\
\begin{align}x_{uv}>0 &\rightarrow w_{uv}(1-\alpha_u)=\beta_v\\
\alpha_u>0 &\rightarrow \displaystyle\sum_v x_{uv}w_{uv}=B_u \\
\displaystyle\sum_v x_{uv}w_{uv}<B_u &\rightarrow \alpha_u=0\\
\beta_u>0 &\rightarrow \displaystyle\sum_u x_{uv}=1 \nonumber
\end{align}
Since the first constraint of the primal problem gives us $w_{uv}(1-\alpha_u)\leqslant \beta_v$, comparing to equation (1), we can conclude that\\
\textbf{Rule (1): $v$ is allocated (ie., $x_{uv}>0$) to the bidder who maximizes the scaled bid $w_{uv}(1-\alpha_u)$}.\\ In other words, given only the optimal dual variables $\alpha_u$, one can reconstruct the optimal primal solution by assigning each $v$ to the bidder $u$ who maximizes $w_{uv}(1-\alpha_u)$. Equation (2) and (3) tells us that \\
\textbf{Rule 2: $\alpha_u$ is positive only for bidders $u$ who have finished their budget and remains to be 0 if budget not finished}.\\ In the online Primal-dual problem, one can utilize these two rules, i.e., maintain a best estimate for the optimal dual variables $\alpha_u$ and use the first rule when deciding which $u$ to assign.
\section{Online Greedy Algorithm and MSVV Algorithm}
\subsection{Greedy Algorithm}
I first provide the Greedy Algorithm for the online Adwords problem:\\
\begin{align*}&\textbf{Initialize}: \alpha_u=0 \text{ }  \forall u, \beta_v=0 \text{ }\forall v, x_{uv}=0\text{ } \forall u,v.\\
&\textbf{When the next vertex }  v\in V \textbf{arrives:}\\
& \text{If v has no available neighbors, continue}\\
&  \text{Match v to that available neighbor } u^* \text{ which maximizes } bid_{uv}\\
&\textbf{Update: } \alpha_u^*=\begin{cases}
1 & \text{if }u^* \text{ has finished its budget}\\
0 & \text{if }u^* \text{ has not yet finished its budget}\end{cases}\\
&\beta_v=bid_{u^{*}v}\geqslant bid_{uv}\\
&x_{u^*v}=1
\end{align*}Note that the because of the small-bid assumptions, when each $v$ arrives, we can assume that $bid_{uv}=\text{min}(w_{uv}, RB_u)=w_{uv}$, i.e., taking $bid_{uv}$ will never exceed the remaining budget of $u$. Also note that since every available vertex $u$ has $\alpha_u=0$ (hasn't finished its budget), it is the same as saying  "match $v$ to the available neighbor $u^*$ which maximizes $bid_{uv}(1-\alpha_u)$."
\begin{theorem}Greedy achieves a ratio of $\frac{1}{2}$ for the Adwords problem with the small-bids assumption.
\end{theorem}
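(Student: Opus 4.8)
The plan is to run a primal–dual charging argument: the Greedy algorithm, as written, simultaneously maintains a primal solution (the $x_{uv}$'s) and a dual solution (the $\alpha_u$'s and $\beta_v$'s), and I would compare the value of the dual solution it produces against the value of the primal solution it produces. Let $P=\sum_{u,v}x_{uv}w_{uv}$ be the total revenue Greedy collects, and let $D=\sum_u B_u\alpha_u+\sum_v\beta_v$ be the objective of its dual solution at termination. The two facts I need are: (i) this dual solution is feasible for the dual LP, and (ii) $D\le 2P$. Granting these, weak duality (reviewed above: any feasible dual value upper-bounds any feasible primal value, in particular the LP optimum, which in turn dominates the integral optimum $\mathrm{OPT}$) gives $\mathrm{OPT}\le D\le 2P$, i.e.\ $P\ge\tfrac12\,\mathrm{OPT}$, which is exactly the claim.

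For dual feasibility I would check the constraint $w_{uv}\alpha_u+\beta_v\ge w_{uv}$ for every pair $(u,v)$ by a case analysis on whether bidder $u$ has exhausted its budget at termination. If $u$ has, then $\alpha_u=1$ and the constraint reads $w_{uv}+\beta_v\ge w_{uv}$, which holds since $\beta_v\ge 0$. If $u$ has not exhausted its budget, then $\alpha_u=0$ and I must show $\beta_v\ge w_{uv}$; here the point is that a bidder that never exhausts its budget is available at every step, in particular it was an available neighbor of $v$ when $v$ arrived, so when Greedy matched $v$ to the bid-maximizing available neighbor $u^\ast$ and set $\beta_v=bid_{u^\ast v}$ we had $\beta_v=bid_{u^\ast v}\ge bid_{uv}=w_{uv}$, the last equality by the small-bids assumption $bid_{uv}=\min(w_{uv},RB_u)=w_{uv}$. (If $v$ had no available neighbor at all, then every neighbor of $v$ had already finished its budget, so this second case is vacuous for neighbors of $v$, and for non-neighbors $w_{uv}=0$ and the constraint is trivial.)

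For $D\le 2P$ I would bound the two parts of $D$ separately. The $\beta$ part is immediate: $\beta_v$ is exactly the bid collected from the match of $v$ (and $0$ for unmatched $v$), so $\sum_v\beta_v=\sum_{u,v}x_{uv}w_{uv}=P$. For the $\alpha$ part, $\sum_u B_u\alpha_u=\sum_{u\text{ exhausted}}B_u$, and each exhausted bidder $u$ has had (essentially) $B_u$ dollars spent on it, so $\sum_{u\text{ exhausted}}B_u\le\sum_u\big(\sum_v x_{uv}w_{uv}\big)=P$; adding the two gives $D\le 2P$.

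The step I expect to require the most care is this last estimate of the $\alpha$-part, precisely because of the small-bids assumption: a bidder flagged as "exhausted" need not have had exactly $B_u$ spent on it — it can fall short by up to one bid $\max_v w_{uv}$ — and likewise the identity $bid_{uv}=w_{uv}$ used in feasibility is only the idealized form of $\min(w_{uv},RB_u)$. The honest version therefore carries a factor $(1-\epsilon)$, with $\epsilon$ bounding the ratio of a single bid to a budget, yielding $P\ge\tfrac12(1-\epsilon)\,\mathrm{OPT}\to\tfrac12\,\mathrm{OPT}$ in the small-bids regime; I would state the clean $\tfrac12$ bound in the idealized limit and remark on this error term. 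I would also close with a sentence on tightness, exhibiting the classic $2\times 2$ instance (two bidders, unit budgets) on which Greedy collects only half of $\mathrm{OPT}$, so that the analysis cannot be improved.
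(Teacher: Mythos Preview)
Your proposal is correct and follows essentially the same primal--dual argument as the paper: the same case split for dual feasibility and the same accounting that $\sum_v\beta_v=P$ while $\sum_u B_u\alpha_u\le P$, yielding $D\le 2P$ and hence the $\tfrac12$ ratio via weak duality. Your added remarks on the $(1-\epsilon)$ error from the small-bids approximation and on the tightness example go slightly beyond what the paper states, but the core proof is the same.
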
I prove this theorem by showing that at the end of the algorithm, the solution is feasible for both the primal and the dual problem, and the primal objective value is at least one half of the optimal value. I start by proving a proposition:
\begin{prop}For the Adwords problem with the small-bids assumption, if at the end of the algorithm, the primal objective is at least $\frac{1}{k}$ times the dual objective, then the algorithm achieves a ratio of $\frac{1}{k}$ of the optimal solution.
\end{prop}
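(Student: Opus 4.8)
The plan is to obtain the Proposition from weak LP duality together with the fact that the primal LP of Section 1.3 is a relaxation of the combinatorial Adwords problem. Throughout its run the algorithm maintains a primal assignment $x=(x_{uv})$ and dual variables $(\alpha,\beta)$; write $P=\sum_{u,v}x_{uv}w_{uv}=w^Tx$ for the primal objective attained at termination and $D=\sum_u B_u\alpha_u+\sum_v\beta_v=b^Ty$ for the dual objective. The hypothesis is exactly $P\geq\tfrac1k D$, and the target is $P\geq\tfrac1k\,\mathrm{OPT}$, where $\mathrm{OPT}$ is the revenue of an optimal offline assignment.

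First I would check that $P$ is literally the revenue the algorithm collects. When $v$ is matched to $u^{*}$ we earn $\mathrm{bid}_{u^{*}v}=\min(w_{u^{*}v},RB_{u^{*}})$, but the small-bids assumption forces $\mathrm{bid}_{u^{*}v}=w_{u^{*}v}$, so this match adds exactly $w_{u^{*}v}$ to the revenue and raises $P$ by $w_{u^{*}v}x_{u^{*}v}=w_{u^{*}v}$. Hence the algorithm's total revenue equals $P$, and it suffices to prove $P\geq\tfrac1k\mathrm{OPT}$.

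Next I would show $D\geq\mathrm{OPT}$. The step with actual content is dual feasibility of $(\alpha,\beta)$: the only nontrivial constraint is $\beta_v\geq w_{uv}(1-\alpha_u)$, and it holds because for an available bidder $u$ we have $\alpha_u=0$ and (by small bids) $\mathrm{bid}_{uv}=w_{uv}$, so $\beta_v=\mathrm{bid}_{u^{*}v}=\max_{u\ \mathrm{avail}}w_{uv}\geq w_{uv}$, while a bidder that has exhausted its budget has $\alpha_u=1$ and the constraint reduces to $\beta_v\geq 0$. Given feasibility, weak duality applied to the primal--dual pair yields $w^Tx'\leq b^Ty=D$ for every primal-feasible $x'$, so $\mathrm{OPT}_{\mathrm{LP}}\leq D$; and since the indicator vector of any valid offline assignment satisfies every budget constraint and matches each $v$ at most once, it is primal-feasible, giving $\mathrm{OPT}\leq\mathrm{OPT}_{\mathrm{LP}}\leq D$. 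Chaining with the hypothesis gives $P\geq\tfrac1k D\geq\tfrac1k\mathrm{OPT}$, which is the Proposition.

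The argument is short once the framework is set up; the place where care is needed — and where the real work sits — is the dual-feasibility verification, since weak duality is vacuous unless the $y$ we plug in is genuinely feasible. A secondary subtlety is comparing against the right benchmark: $\mathrm{OPT}$ is the best \emph{integral} offline assignment, and it is the relaxation inequality $\mathrm{OPT}\leq\mathrm{OPT}_{\mathrm{LP}}$ that lets duality bound it at all. The remaining ingredient, that the algorithm's revenue coincides with the primal objective $P$, is immediate from the small-bids assumption.
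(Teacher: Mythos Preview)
Your chain $P \ge \tfrac{1}{k}D \ge \tfrac{1}{k}\,\mathrm{OPT}$ is exactly the paper's argument, and your use of weak duality (plus the trivial $\mathrm{OPT}\le\mathrm{OPT}_{\mathrm{LP}}$) in place of the paper's appeal to strong duality is an equally valid way to get $D\ge\mathrm{OPT}$.

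The one structural difference worth flagging: you fold a Greedy-specific dual-feasibility verification into the Proposition itself, arguing from the particular update rules $\alpha_u\in\{0,1\}$ and $\beta_v=\max_{u\ \text{avail}}w_{uv}$. The paper deliberately keeps the Proposition algorithm-agnostic---it silently assumes $(\alpha,\beta)$ is dual-feasible and defers the actual feasibility check to the individual theorems (Theorem~1 for Greedy, Theorem~3 for MSVV, where the feasibility argument is quite different). Your version is correct but proves strictly more than the Proposition requires, and in doing so loses the reusability that lets the paper invoke the same Proposition for MSVV with $k=\tfrac{e}{e-1}$. If you want to match the paper's modularity, state dual feasibility as a hypothesis of the Proposition and move your $\beta_v\ge w_{uv}(1-\alpha_u)$ computation into the proof of Theorem~1.
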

\begin{proof}Define $Primal$ as the primal objective value, $Dual$ as the dual objective value, $Dual^*$ as the optimal dual value, and $OPT$ as the optimal objective value. We have the following:\begin{center} $Primal\geqslant \frac{1}{k} Dual \geqslant \frac{1}{k} Dual^* = \frac{1}{k} OPT$.
\end{center}
The first inequality is given. Since the dual problem is minimization, we must have $Dual \geqslant Dual^*$ for $Dual^*$ to be optimal. By strong duality, we also have $Dual^*=OPT$.
\end{proof}
We now prove Theorem 1:
\begin{proof}\textbf{(1) Feasibility:} I first prove the algorithm outputs a feasible solution through demonstrating that the primal and the dual problems are both feasible. First the primal solution is feasible since we only allocate $v$ to the bidder $u$ who has not finished its budget yet, and allocate $v$ to only one bidder. For the dual problem constraint, at the end of the algorithm, if $u$ has finished its budget, we know $\alpha_u=1$ and clearly $w_{uv}\alpha_u+\beta_v\geqslant w_{uv}$. If $u$ still has budget remaining, we need to show $\beta_v\geqslant w_{uv}$. However, at each iteration as we asign $v$, we pick $u$ such that $u^*$ maximizes $w_{u^*v}$ and update $\beta_v=w_{u^*v}$. Therefore, $\beta_v = w_{u^*v}\geqslant w_{uv}$.\\
\textbf{(2) Ratio:}By proposition 2, we need to show that the objective is at least $\frac{1}{2}$ of the dual objective. At each iteration when $v$ shows up, we allocate it to the optimal $u^*$, and two cases might occur: $u^*$ has finished its budget by spending $w_{u^*v}$, or $u^*$ still has budget available. In the latter case, the primal objective increases by $x_{u^*v}w_{u^*v}=w_{u^*v}$, and the dual objective also increases by $\beta_v=w_{u^*v}$. In the former case, both objective increase by $w_{u^*v}$, and in the dual objective function, $\alpha_u^*$ also increases from 0 to 1. Therefore we add $B_{u^*}=\displaystyle\sum_{v\in S} w_{u^*v}$ where $S$ is the set of all $v's$ asigned to $u^*$. Thus we count all of the $w_{u^*v}$ again for all of the previously $v$ allocated to $u$. The worst case being that all $u's$ have finished their budget and the dual objective value is twice as the primal one. In other words, the primal objective is at least $\frac{1}{2}$ times the dual objective.
\end{proof}
\subsection{Online MSVV Algorithm}
In the greedy algorithm, we set the dual $\alpha_u$ to 0 or 1. Similarly, we can find the best online dual variables $\alpha_u$ as a function of the fraction of budget spent in the MSVV algorithm. For simplicity we take all budget $B_u=1, \forall u$. Define \begin{center}
$k=\frac{1}{1-\frac{1}{e}}$\end{center}
\begin{center}$x_u$=fraction of budget of $u$ that has been spent so far\end{center}
 \begin{center}$f(x)=\frac{e^x-1}{e-1}, \forall x\in [0,1]$\end{center}
 \begin{center}$\Delta_{uv}(x)=f'(x)w_{uv}=ke^{x-1}w_{uv}$\end{center}
I now provide the online MSVV algorithm:\begin{align*}&\textbf{Initialize}: \alpha_u=0 \text{ }  \forall u, \beta_v=0 \text{ }\forall v\\
&\textbf{When the next vertex }  v\in V \textbf{arrives:}\\
& \text{If v has no available neighbors, continue}\\
&  \text{Match v to that available neighbor } u^* \text{ which maximizes } kw_{uv}-\Delta_{uv}(x_u)\\
&\textbf{Update: } \alpha_u=\alpha_u+\Delta_{uv}(x_u^*)\\
&\beta_v=kw_{uv}-\Delta_{uv}(x_u^*)\\
\end{align*}
\textbf{Remark:} Note that while deciding which $u$ we assign $v$ to, we have \begin{center}$kw_{uv}-\Delta_{uv}(x_u)=kw_{uv}(1-e^{x_u-1})=kw_{uv}(1-e^{-(1-x_u)})$,\end{center} where $(1-x_u)=B_u-x_u=RB_u$ stands for the remaning budget of $u$. Therefore it is equivalent to assign the incoming node $v$ to a neighbor $u$ that maximizes $w_{uv} f(\frac{RB_u}{B_u})=w_{uv} f(RB_u)$, which we have seen in class.
\begin{theorem}Online MSVV algorithm achieves ratio $\frac{1}{k}=1-\frac{1}{e}$ for the Adwords problem with the small bids assumption.
\end{theorem}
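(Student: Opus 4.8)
The plan is to follow the template of the proof of Theorem~1: by Proposition~2 it suffices to show that the algorithm ends with a solution that is feasible for both the primal and the dual, and whose primal objective is at least $\frac1k$ times the dual objective. In fact the update rule is designed so that the latter holds with equality. Concretely, each time a vertex $v$ is matched to $u^{*}$ the primal value increases by $w_{u^{*}v}$, while $\alpha_{u^{*}}$ increases by $\Delta_{u^{*}v}(x_{u^{*}})$ and $\beta_v$ is set to $kw_{u^{*}v}-\Delta_{u^{*}v}(x_{u^{*}})$, so the dual value $\sum_u B_u\alpha_u+\sum_v\beta_v$ (with all $B_u=1$) increases by exactly $kw_{u^{*}v}$; a skipped vertex changes neither. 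Summing over all arrivals yields $Dual=k\cdot Primal$, hence $Primal=\frac1k Dual\geqslant\frac1k Dual^{*}=\frac1k OPT$ by Proposition~2 and strong duality. Along the way one notes the dual iterate is nonnegative: $\alpha_u$ is a sum of nonnegative increments and $\beta_v=kw_{u^{*}v}(1-e^{x_{u^{*}}-1})\geqslant 0$ because $x_{u^{*}}\leqslant 1$.

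Primal feasibility is exactly as in the greedy case: each $v$ is matched at most once and only to a bidder with budget remaining, and under the small-bids assumption the amount actually charged is $w_{u^{*}v}$ up to a negligible term (exactly, in the limit of infinitesimal bids), so no budget constraint is violated. The real content is dual feasibility, i.e.\ $w_{uv}\alpha_u+\beta_v\geqslant w_{uv}$, equivalently $\beta_v\geqslant w_{uv}(1-\alpha_u)$, for every edge $(u,v)$. Since $\alpha_u$ only grows and $\beta_v$ is frozen once $v$ is processed, it is enough to verify this at the moment $v$ arrives. At that moment $u^{*}$ maximizes $kw_{uv}-\Delta_{uv}(x_u)=kw_{uv}(1-e^{x_u-1})$ over the neighbors $u$ of $v$, so $\beta_v\geqslant kw_{uv}(1-e^{x_u-1})$ for every such $u$, with $x_u$ the spent fraction of $u$ at that instant; and a one-line computation using $k=\frac{e}{e-1}$ gives the identity $k(1-e^{x-1})=1-f(x)$, so $\beta_v\geqslant w_{uv}\bigl(1-f(x_u)\bigr)$. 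Thus dual feasibility reduces to the invariant $\alpha_u\geqslant f(x_u)$, to be maintained at all times.

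Proving this invariant is the step I expect to be the main obstacle, and it is where the small-bids hypothesis is essential. By construction $\alpha_u=\sum_j f'(\xi_j)\,w_{uv_j}$ is a Riemann-type sum for $\int_0^{x_u}f'(t)\,dt=f(x_u)-f(0)=f(x_u)$, over the partition of $[0,x_u]$ into pieces of lengths equal to the bids $w_{uv_j}$ accepted by $u$. Since $f$ is convex ($f''(x)=ke^{x-1}>0$), sampling $f'$ at the right end of each piece makes the sum an over-estimate of the integral, giving $\alpha_u\geqslant f(x_u)$ outright; if the algorithm instead samples the left end, convexity together with $f''\leqslant k$ on $[0,1]$ bounds the shortfall $f(x_u)-\alpha_u$ by a quantity of order $k\varepsilon$, where $\varepsilon=\max_{u,v}w_{uv}/B_u$ is the small-bids parameter, so the dual constraint is violated by at most $w_{uv}\cdot O(k\varepsilon)$. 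One then either inflates every $\alpha_u$ by this $O(\varepsilon)$ amount (which keeps the dual feasible and changes $Dual$ by only $O(\varepsilon)$) or states the guarantee as a ratio $1-\frac1e-O(\varepsilon)$ that tends to $1-\frac1e$ as bids become infinitesimal.

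Combining the pieces: the output is primal- and dual-feasible (up to the vanishing slack just discussed) and $Primal=\frac1k Dual\geqslant\frac1k OPT$, so the algorithm achieves ratio $\frac1k=1-\frac1e$ in the small-bids regime. Everything except the invariant is the same accounting as in the greedy proof, with the $\{0,1\}$-valued dual profile replaced by the exponential $f$; the only place that needs genuine care is controlling the discretization error in the Riemann sum, and the small-bids assumption is precisely what makes that error negligible.
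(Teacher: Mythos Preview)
Your proposal is correct and follows essentially the same primal--dual template as the paper: verify primal and dual feasibility, then observe that each matched arrival raises the primal by $w_{u^{*}v}$ and the dual by exactly $kw_{u^{*}v}$, so Proposition~2 gives the $1-\tfrac1e$ ratio. The only cosmetic difference is in how the key invariant is packaged: the paper states it as the (approximate) equality $\alpha_u=f(x_u)$ (Proposition~4), replacing the sum $\sum_i f'(x_i)w_{uv_i}$ by the integral $\int_0^{x_u}f'$ under the small-bids assumption, and then uses monotonicity of $f$ (comparing $x_u$ at arrival to the final spend $X_u$); you instead phrase it as the one-sided invariant $\alpha_u\geqslant f(x_u)$ and use monotonicity of $\alpha_u$ over time. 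Your discussion of the left- versus right-endpoint Riemann sum and the resulting $O(\varepsilon)$ slack is in fact more careful than the paper, which simply asserts the integral identity without quantifying the discretization error.
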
Again I prove the theorem by showing feasiblity and the primal-dual ratio. I first prove another proposition:
\begin{prop}At any point in time, $\alpha_u=f(x_u), \forall u\in U$.
\end{prop}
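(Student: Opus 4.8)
The plan is to prove $\alpha_u = f(x_u)$ by induction on the number of vertex arrivals the algorithm has processed, observing that $\alpha_u$ and $x_u$ only ever change together and in a tightly coupled way. For the base case, before any vertex arrives we have $\alpha_u = 0$ and $x_u = 0$ for every $u$, and since $f(0) = \frac{e^0 - 1}{e - 1} = 0$, the identity holds trivially.

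For the inductive step, assume $\alpha_u = f(x_u)$ for all $u$ just before a vertex $v$ arrives. If $v$ is discarded, or matched to some bidder other than $u$, then neither $\alpha_u$ nor $x_u$ moves and the identity survives for $u$. The only case to check is $v$ matched to $u$: then $x_u$ increases by $w_{uv}$ (recall $B_u = 1$), so $x_u \mapsto x_u + w_{uv}$, while the update rule adds $\Delta_{uv}(x_u) = f'(x_u) w_{uv}$ to $\alpha_u$. Using the inductive hypothesis, the new value of $\alpha_u$ is $f(x_u) + f'(x_u) w_{uv}$, which I then need to match against the desired value $f(x_u + w_{uv})$.

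Here is where the small-bids assumption does the real work, and it is the one genuine subtlety. The expression $f(x_u) + f'(x_u) w_{uv}$ is precisely the first-order Taylor expansion of $f$ at $x_u$, so $f(x_u + w_{uv}) - \big(f(x_u) + f'(x_u) w_{uv}\big) = \tfrac{1}{2} f''(\xi)\, w_{uv}^2$ for some $\xi$ between $x_u$ and $x_u + w_{uv}$, and since $f''(x) = \frac{e^x}{e-1} \leq \frac{e}{e-1} = k$ on the relevant range, this discrepancy is at most $\tfrac{k}{2} w_{uv}^2$. Equivalently, chaining all the updates that land on $u$, the final $\alpha_u = \sum_j f'(x_u^{(j)})\, w_{u v_j}$ is a left Riemann sum for $\int_0^{x_u} f'(t)\, dt = f(x_u) - f(0) = f(x_u)$, whose mesh is bounded by $\max_{u,v} w_{uv}$. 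Under the small-bids assumption this mesh vanishes, so $\alpha_u \to f(x_u)$, and the identity is exact in the idealized continuous model; accordingly I would read the proposition (as is standard in this regime) as holding up to a $(1 - o(1))$ factor and treat $\alpha_u = f(x_u)$ as an equality in the remainder of the analysis. The main obstacle is thus not the bookkeeping of the induction but stating honestly — and controlling — the error introduced by replacing the true increment $f(x_u + w_{uv}) - f(x_u)$ with its linearization $\Delta_{uv}(x_u) = f'(x_u) w_{uv}$.
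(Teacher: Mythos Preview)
Your argument is correct and is essentially the same as the paper's: both recognize that $\alpha_u = \sum_i f'(x_i)\,w_{uv_i}$ is a left Riemann sum for $\int_0^{x_u} f'(t)\,dt = f(x_u)$, and both invoke the small-bids assumption to pass from the sum to the integral. The paper simply writes the sum and replaces it by the integral in one line, whereas you package the same computation as an induction and are more explicit about the Taylor/Riemann error --- an honesty the paper's proof elides with the phrase ``every piece of $w_{uv_i}$ is infinitely small.''
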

\begin{proof}WLOG, given $u\in U$, suppose at any given time $t$, $v_1,\ldots,v_k$ were allocated to $u$ so far in that order, and let $x_1,\ldots, x_k$ be the spending of $u$ up until the times when $v_1,\ldots, v_k$ arrives. Note that the cumulative spending of $u$ as $v_i$ arrives equals to the sum of all previous costs/bids ($w_{uv_j}$), where $j=1,\ldots, i-1$. Thus we have $x_i=\sum_{j=1}^{i-1}w_{uv_j}$. According to our algorithm, at the time $v_k$ has been assigned to $u$, $\alpha_u=\sum_{i=1}^{k} \Delta_{uv_i}(x_i)$. Thus we have \begin{center}$\alpha_u=\sum_{i=1}^{k} \Delta_{uv_i}(\sum_{j=1}^{i-1}w_{uv_j})=\int_{x=0}^{x_u} ke^{x-1}dx=k(e^{x_u-1}-e^{-1})=\frac{e^{x_u}-1}{e-1}$
\end{center}We have the second equality because we assume that every piece of $w_{uv_i}$ is infinitely small, and thus can repleace it by an integral.
\end{proof}
\begin{proof}\textbf{(1) Feasibility: }First, the primal solution is feasible since we only assign $v$ to $u$ who has not finished its budget. Now I show the feasibility for the dual problem, i.e., at the end of algorithm: $w_{uv} \alpha_u+\beta_v \geqslant w_{uv}$, or equivalently $\beta_v \geqslant w_{uv}(1-\alpha_u)$. When an arbitrary $v$ arrives, suppose we assign it to $u^*$, and let $x_{u^*}$ be the fraction of budgets spent by $u^*$. Let $u$ be some other bidders that is not optimal and let $x_u$ be the fraction of budgets spent by $u$. Note $k w_{u^*v}-\Delta_{u^*v}(x_{u^*})\geqslant k w_{uv}-\Delta_{uv}(x_u)$ since in each iteration we pick $u$ that maximizes the value of the expression. Let $X_u$ be the fraction of budget spent by $u$ at the end of the algorithm $(x_u\leqslant X_u)$. We have \begin{align*}\beta_v &=k w_{u^*v}-\Delta_{u^*v}(x_{u^*})  \text{              (by definition of }\beta)\\
&\geqslant k w_{uv}-\Delta_{uv}(x_u) \\
&= w_{uv}(k-ke^{x_u-1})=\frac{1-e^{x_u-1}}{e-1}\\
&=w_{uv}(1-f(x_u))\geqslant w_{uv}(1-f(X_u)  \text{              (by monotonicity of }f(x))\\
&=w_{uv}(1-\alpha_u)
\end{align*}
\textbf{(2) Primal-Dual Ratio:} In each iteration, the primal objective increases by $w_{uv}$, and the dual objective increases by $\alpha_u+\beta_v=\Delta_{uv}(x_u)+kw_{uv}-\Delta_{uv}(x_u)=kw_{uv}$. Therefore at the ned of the algorithm, the primal objective is at least $\frac{1}{k}$ times the dual objective, by Proposision 2, the proof is complete.
\end{proof}
\section{Primal-dual Training-based Algorithm}
In this section, I first introduce the general class of packing linear programs (PLP), and then introduce the primal-dual training-based algorithm and prove that it achieves "$1-O(\epsilon)$" approximation for the online stochastic PLP problem with high probability with mild assumptions. The "$1-O(\epsilon)$" approximation/competitive means that with high probability under the randomness in the stochastic model, the algorithm achieves at least $1-O(\epsilon)$ fraction of the objective value of the offline optimal solution ($OPT$) for the actual instance.
\subsection{The General Class of PLP}
Let $J$ be a set of $m$ resources, and each resource $j\in J$ has a capacity $c_j$. The set of resources and their capacities are known in advance. Let $I$ be a set of $n$ agents that arrive one by one online, each with a set of options $O_i$. Each option $o\in O_i$ of agent $i$ has an associated value $w_{io}\geqslant 0$ and requires $a_{ioj}$ units of each resource $j$. The set of options and the values $w_{io}$ and $a_{ioj}$ arrive together with agent $i$. When an agent arrives, the algorithm has to immdediately decide whether to assign the agent and which option to choose. The goal is to find a $maximum-value$ allocation that does not allocate more of any resource than is available.\\
\textbf{Remark 1:} Comparing to the AdWords problem, we can see that $J=U$ stands for the set of companies/advertisers, each of which has budget $c_j=B_u$ available. $I=V$ is the set of nodes arrives. As $v_i$ arrives, it has a set of options $O_i=\{\text{available neighbors } u\}$. Each option has an associated value $w_{io}=a_{ioj}=w_{ij}$.\\
\textbf{Remark 2:} Since PLP is a generalization of AdWords problem, where the key difference being that $w_{io}$ and $a_{ioj}$ are unrelated in PLP, this algorithm also applies to AdWords problem, for which the proof is even simpler.\\
We only need to adjust the formulation in Section 1.2 a little bit to adapt the PLP. For convenience, we manipulate the first constraint of the primal: we divide both sides by $c_j(=B_u)$, and the right-hand side becomes 1, the left-hand side becomes $a_{ioj}^*=a_{ioj}/c_j$. We use notation $a_{ioj}$ representing $a_{ioj}^*$ below for simplicity. Also note that $i\equiv v$ and $\{o\in O_i\}\equiv u$. The primal and dual follows:
\begin{align*} \text{max} \displaystyle\sum_{i}\displaystyle\sum_{o\in O_i} x_{io} w_{io} &   &\text{min }  \displaystyle\sum_{j}\alpha_j+ \displaystyle\sum_{i} \beta_i\\
\forall j: \displaystyle\sum_{i} x_{io} a_{ioj} & \leqslant 1   &\forall i,o: a_{ioj} \alpha_j+\beta_i &\geqslant w_{io}\\   
\forall v: \displaystyle\sum_{o\in O_i} x_{io} &\leqslant 1   &\alpha_j, \beta_i \geqslant 0\\
x_{io}&\geqslant 0
\end{align*}
\subsection{Training-Based Primal-Dual Algorithm (PTAS)}
We define $n$ to be the total number of agents, $m$ as the number of constraints, $q=\text{max}_i |O_i|$ as the maximum number of options for any agent. Finally, define the $gain$ from option $o\in O_i$ as $w_{io}- \sum_j \alpha_j a_{ioj}$. Recall in the MSVV Algorithm, we match $v$ to $u$ which maximizes $\beta_v=kw_uv-\Delta_{uv}(x_u)=kw_{uv}(1-e^{-(1-x_u)})=w_{uv}f(RB_u)$. In other words, $\beta_v$ is the maximum $"gain"$ we obtain as each $v$ comes. Also recall that $\alpha_u=f(x_u)\forall u\in U$. Therefore, the $gain$ in PLP, if applied to Adwords Problem, is $w_{uv}-\sum_u \alpha_u w_{uv}=w_{uv}(1-\sum_u \alpha_u)=w_{uv}(B_u-\sum_u f(x_u))=w_{uv}f(RB_u)$ (we have already normalized $B_u=1$). We can see the nice analogy between $gain$, $\beta_v$, and what we have seen in class.\\

The main idea behind the algorithm is to solve the LP on a sample of the queries. Note that we cannot expect to get a representative sample of all types of agents from a small sample. In other words, we cannot expect to estimate the ditribution of $w_{io}$ and $a_{ioj}$. We then use only the dual variables $\alpha_j$ and $\beta_i$ from the sampled LP to guide us for solving the rest of query stream.\\
\textbf{Algorithm:}\\
1. Let $S$ denote the first $\epsilon n$ agents in the sequence. We do not select these agents for the purpose of analysis. May assign them according to some online algorithm. Let $\hat{D}$ denote the sampled version of the dual program on the agents in S with following changes: for all $j\in [m]$, replace $c_j$ by $\epsilon c_j$, which is equivalent to reducing the capacity of a constraint from 1 to $\epsilon$ (i.e., change the right-hand side of the first constraint to $\epsilon$).\\
2. Solve $\hat{D}$, let $\alpha_j^*$ denote the value of the dual variable for constraint $j$ in this optimal solution.\\
3. For each subsequent agent $i$, select the option $o$ that provides maximum gain, and set $\beta_i=gain(o)$. Let $w_{max}=\text{max}_{i,o}\{{w_{io}}\}$, $a_{max}=\text{max}_{i,o,j}\{{a_{ioj}}\}.$
Note $\alpha_j^*$ serves as a cost/price per unit of the resource for the remaining agents.
\subsection{Theorem and Proof}
\begin{theorem}The Training-Based Primal-Dual algorithm is $(1-O(\epsilon))$-competitive for the online stochastic PLP problem with high probability, as long as folloing conditions hold: (1) $\text{max}_{i,o}\{\frac{w_{io}}{OPT}\}\leqslant \frac{\epsilon}{(m+1)(ln n+ln q)}$ and (2) $\text{max}_{i,o}\{\frac{a_{ioj}}{c_j}\}\leqslant \frac{\epsilon^3}{(m+1)(ln n+ln q)}$.
\end{theorem}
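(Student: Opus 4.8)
The plan is a dual-fitting argument: the prices $\alpha_j^*$ learned on the sample $S$, padded with one residual per agent, form an almost-feasible dual of the \emph{full} program of value at most $(1+O(\epsilon))\,OPT$; I then show that the algorithm's primal value tracks this dual value up to an additive $O(\epsilon)\,OPT$ and that the allocation is feasible up to a $1+O(\epsilon)$ factor. The bridge from the sample to the full instance is concentration of measure, and hypotheses (1) and (2) are precisely what make the required union bounds affordable. I use the normalization $c_j=1$ of Section 3.1 throughout.

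First I would set up the accounting. For the realized price vector $\alpha^*$ define, for \emph{every} agent $i\in I$, the residual $\hat\beta_i:=\max\{0,\ \max_{o\in O_i}(w_{io}-\sum_j \alpha_j^* a_{ioj})\}$. By construction $(\alpha^*,\hat\beta)$ is nonnegative and satisfies every dual constraint $a_{ioj}\alpha_j+\beta_i\ge w_{io}$, hence is feasible for the full dual program; weak duality (Section 1.2) gives $OPT\le \sum_j\alpha_j^* + \sum_{i\in I}\hat\beta_i$. For each $i\notin S$ the algorithm sets $\beta_i=\hat\beta_i$ and picks the max-gain option $o_i$, so $w_{io_i}=\beta_i+\sum_j\alpha_j^* a_{io_i j}$ whenever $\beta_i>0$; summing over the selected agents, $ALG=\sum_{i\notin S}\beta_i+\sum_j\alpha_j^* U_j$, where $U_j$ is the total consumption of resource $j$. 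Splitting $\sum_{i\in I}\hat\beta_i=\sum_{i\in S}\hat\beta_i+\sum_{i\notin S}\beta_i$ and combining with the weak-duality bound yields $ALG\ge OPT-\sum_j\alpha_j^*(1-U_j)-\sum_{i\in S}\hat\beta_i$. It therefore suffices to establish, on one high-probability event: (a) $U_j\le 1+O(\epsilon)$ for all $j$; (b) $U_j\ge 1-O(\epsilon)$ whenever $\alpha_j^*>0$; (c) $\sum_j\alpha_j^*=O(OPT)$; and (d) $\sum_{i\in S}\hat\beta_i=O(\epsilon)\,OPT$. Given these, the middle term is $\le O(\epsilon)\sum_j\alpha_j^*=O(\epsilon)\,OPT$ and the last is $O(\epsilon)\,OPT$, so $ALG\ge(1-O(\epsilon))\,OPT$, and (a) lets us restore exact feasibility by scaling the allocation down by $1+O(\epsilon)$ (or discarding the $O(\epsilon)$-fraction of agents causing overflow), costing only another $O(\epsilon)$ factor.

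For (a) and (b): complementary slackness applied to $\hat D$ — exactly Rule (1) of the introduction — says the optimal primal of $\hat D$ assigns each sampled agent to its max-gain option under $\alpha^*$ and consumes at most $\epsilon$ of each resource, with equality when $\alpha_j^*>0$. Thus the max-gain rule, run on the $\epsilon n$ agents of $S$, has resource-$j$ consumption $\le\epsilon$ (and $=\epsilon$ when $\alpha_j^*>0$). Since agents are i.i.d.\ and hypothesis (2) bounds each agent's per-resource demand by $\epsilon^3/((m+1)(\ln n+\ln q))$, a Bernstein/Chernoff inequality transfers this per-agent consumption rate from $S$ to all of $I$ up to a multiplicative $1\pm O(\epsilon)$; a union bound over the $m$ resources, together with a union bound over the roughly $(nq)^{O(m)}$ candidate price vectors arising from LP bases (needed because $\alpha^*$ is itself a function of $S$), gives (a) and (b) with high probability. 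The logarithm of that union-bound size is $O(m(\ln n+\ln q))$, which is exactly what the $(m+1)(\ln n+\ln q)$ and the extra powers of $\epsilon$ in the hypotheses pay for; hypothesis (1), bounding $w_{\max}/OPT$, plays the analogous role for value terms, e.g.\ controlling the cost of any single boundary or discarded agent.

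For (c) and (d): the sampled dual objective is $OPT(\hat D)=\epsilon\sum_j\alpha_j^*+\sum_{i\in S}\hat\beta_i$, and since $\alpha^*,\hat\beta\ge 0$ this gives $\sum_{i\in S}\hat\beta_i\le OPT(\hat D)$ and $\sum_j\alpha_j^*\le \epsilon^{-1}OPT(\hat D)$ at once, so everything reduces to the estimate $OPT(\hat D)\le(1+O(\epsilon))\,\epsilon\cdot OPT$ with high probability. I expect this ``LP value is essentially linear in the sampling fraction'' lemma to be the crux of the whole proof. The easy direction $OPT(\hat D)\ge(1-O(\epsilon))\,\epsilon\cdot OPT$ is routine (restrict the offline optimum to $S$, check its scaled demands concentrate below $\epsilon$, rescale); the upper bound is delicate and calls for a Devanur--Hayes-style coupling: tile $I$ into $1/\epsilon$ disjoint sample-sized blocks, note that the union of their $\hat D$-optimal solutions is feasible for the full program because the $\epsilon$-capacities add back to $1$, and relate the blockwise optima to $\mathbb{E}[OPT(\hat D)]$ by concentration, deducing $\mathbb{E}[OPT(\hat D)]\le\epsilon\cdot OPT$ and then a high-probability version via the demand bound (2). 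Assembling (a)--(d) into $ALG\ge(1-O(\epsilon))\,OPT$ and applying the rescaling step finishes the proof; the total failure probability is the sum of the inverse-polynomial failure probabilities of the concentration and union-bound steps, which the parameter choices in (1) and (2) force to be $o(1)$.
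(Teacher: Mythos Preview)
Your plan is correct and shares the paper's skeleton exactly: build a feasible dual $(\alpha^*,\hat\beta)$ for the full instance, invoke weak duality, transfer the complementary-slackness equalities from the sample to the whole input by Chernoff-type concentration under hypotheses (1)--(2), and pay for the dependence of $\alpha^*$ on $S$ by a union bound over the at most $(nq)^m$ basic dual solutions. Your points (a), (b), and the vertex-counting union bound are precisely the paper's Lemma on $C(j)$ and its two auxiliary lemmas.

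Where you diverge is in the accounting. You decompose \emph{additively},
\[
ALG \;\ge\; OPT - \sum_j \alpha_j^*(1-U_j) - \sum_{i\in S}\hat\beta_i,
\]
which forces you to bound $\sum_j\alpha_j^*$ and $\sum_{i\in S}\hat\beta_i$ separately, and for that you introduce the ``$OPT(\hat D)\le (1+O(\epsilon))\,\epsilon\cdot OPT$'' lemma with its Devanur--Hayes tiling proof. The paper never needs this lemma. It works instead with $W=\sum_{(i,o)\in O^*}w_{io}$ and $D=\sum_j\alpha_j^*+\sum_{i\in I^*}\beta_i$ and observes that, since $w_{io}=\beta_i+\sum_j\alpha_j^*a_{ioj}$ on $O^*$, one has $W=\sum_i\beta_i+\sum_j\alpha_j^*C(j)$; then $C(j)\ge 1-2\epsilon$ for every $j$ with $\alpha_j^*>0$ gives the \emph{multiplicative} bound $W\ge(1-2\epsilon)D$ in one line, with no need to control $\sum_j\alpha_j^*$ at all. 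A second concentration step (the ``$t$-good'' condition, driven by hypothesis (1)) gives $W(S)\le O(\epsilon)W$, whence $ALG=W-W(S)\ge(1-O(\epsilon))W\ge(1-O(\epsilon))D\ge(1-O(\epsilon))OPT$. So the paper replaces your items (c) and (d) by a single multiplicative chain and a direct concentration bound on $W(S)$; your route via the sampled-LP-value lemma would also succeed, and is in fact closer in spirit to the original Devanur--Hayes analysis, but it is noticeably more work than what the present paper actually does.
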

\textbf{Remark: }Note that the first condition means that no individual option provides too large a fraction of the total value, and the second condition is equivalent to say that no individual option for any agent consumes too much of any resource.\\

The key idea behind the proof is that if $\alpha_i^*$ satisfies the complementary slackness condition on the first $\epsilon n$ agents (being an optimal solution for our sample), then w.h.p it approximately satisfies these conditions on the entire set. I begin the proof by introducing some key definitions.
\begin{definition}Let $I^*\subseteq I$ denote the set of agents $i$ with some option $o$ having non-negative gain, i.e., the set of agents that will be allocated to some resource $j$ (we do not assign any agent $i$ that provides negative gain). Let $O^*$ denote the set of pairs $\{(i,o)| i\in I^*, o=arg\text{max}_{o\in O(i)} gain(o)\}$, i.e., the set of best possible allocations between all agents $i\in I$ and all resources $j\in J$. Let $O^*(S)=O^*\cap S,$ the best possible allocation for the sampled queries. Consequently, we have $O^*-O^*(S)$ represents the allocation of options selected by our algorithm.\end{definition}\textbf{Remark: } For the purpose of analysis, our algorithm do not select any options for agents in $S$. In our algorithm, given a vector $\alpha^*$, by selecting for each agent in $I^*$, the option $o$ that maximizes $gain(o)$ for $\{i\in I^*, o\in O(i)\}$, i.e., $(i,o)\in O^*$ and set $\beta_i=gain(o)$, we obtain a feasible solution to the \textbf{Dual-LP}. Because of the analogy between $\beta_v$ in MSVV and $\beta_i=gain(o)$, the proof is analogous to the feasibility proof for theorem 3 and is omitted here.
\begin{definition}Let $W=\sum_{(i,o)\in O^*}w_{io}$ be the total weight of selected options, and $W(S)=\sum_{(i,o)\in O^*(S)}w_{io}$ be the total weight of selected options of the sample. Let $C(j)=\sum_{(i,o)\in O^*}a_{ioj}$ be the total consumed resources for $j\in J$ and similarly $C(j,S)=\sum_{(i,o)\in O^*(S)}a_{ioj}$.
\end{definition}
For any fixed vector $\alpha^*$, $O^*, W, $ and each $C(j)$ are independent of the choice of the sample S. Thus the expected value of $W(S)$ is $\epsilon W$, and that of $C(j,S)$ is $\epsilon C(j)$. Also note that after implementing some online algorithm for the sampled queries, the result  should be close to $\epsilon W$ and $\epsilon C(j)$.	
\begin{definition}For a sample $S$ and $j\in [m]$, let $r_j(S)=|C(j,S)-\epsilon C(j)|$, and let $t(S)=|W(S)-\epsilon W|$. When refering to the sample, we abbreviate $r_j(S)$ as $r_j$ and $t(S)$ as $t$.\\

1. The sample S is $r_j-{bad}$ if: \begin{center}$r_j\geqslant (m+1)(\text{ln} n+\text{ln} q)a_{max}+\sqrt{C(j)} (2\sqrt{\epsilon(m+1)(\text{ln}(n)+\text{ln}q)a_{max}})$.
\end{center}

2. The sample $S$ is $t-bad$ if:
\begin{center}$t\geqslant (m+1)(\text{ln} n+\text{ln} q)w_{max}+\sqrt{W} (2\sqrt{\epsilon(m+1)(\text{ln}(n)+\text{ln}q)w_{max}})$
\end{center}
\end{definition}

Now I prove Theorem 5, our central theorem. For the following parts, we assume the condtions of Theorem 5 hold. We prove Theorem 5 by proving the following two propositions:
\begin{prop}If the sample S is not t-bad or $r_j-bad$ for any constraint $j$, the value of the options selected by the algorithm is $(1-O(\epsilon)) OPT$.
\end{prop}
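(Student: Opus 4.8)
The plan is to show that the value of the options the algorithm actually commits to equals exactly $W-W(S)=\sum_{(i,o)\in O^{*}\setminus O^{*}(S)}w_{io}$, and to squeeze this between $(1-O(\epsilon))OPT$ and $OPT$ in three stages: first that the ``full'' greedy allocation $O^{*}$ is almost feasible, so $W\le (1+O(\epsilon))OPT$; then, via weak LP duality evaluated at the trained price vector $\alpha^{*}$, that $W\ge (1-O(\epsilon))OPT$; and finally that the part we throw away, $W(S)$, is only $O(\epsilon\,OPT)$.

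First I would record what optimality of $\alpha^{*}$ for $\hat D$ buys us. By complementary slackness for $\hat D$, the restriction $O^{*}(S)$ of the greedy-by-$\alpha^{*}$ allocation to the sampled agents is an optimal primal solution of $\hat D$; in particular it respects the reduced capacities, so $C(j,S)\le\epsilon$ for every $j$, and $C(j,S)=\epsilon$ whenever $\alpha_j^{*}>0$. Moreover the optimal value of $\hat D$ equals $W(S)$, and since the sampled primal is a restriction of the full primal with no larger capacities we get $W(S)\le OPT$ and $\epsilon\sum_j\alpha_j^{*}\le W(S)$. Combining $C(j,S)\le\epsilon$ with ``$S$ is not $r_j$-bad'' and condition (2) (after normalization, $a_{\max}\le\epsilon^{3}/((m+1)(\ln n+\ln q))$) gives $\epsilon C(j)\le\epsilon+r_j$ with $r_j/\epsilon<\epsilon^{2}+2\epsilon\sqrt{C(j)}$; solving the resulting quadratic inequality in $\sqrt{C(j)}$ yields $C(j)\le 1+O(\epsilon)$ for all $j$, so scaling $O^{*}$ by $1/(1+O(\epsilon))$ produces a feasible solution and hence $W\le(1+O(\epsilon))OPT$. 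Feeding this back, ``not $t$-bad'' together with condition (1) gives $t=O(\epsilon\,OPT)$, hence $W(S)\le\epsilon W+t=O(\epsilon\,OPT)$, and therefore $\sum_j\alpha_j^{*}\le W(S)/\epsilon=O(OPT)$.

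Next I would lower-bound $W$. Setting $\beta_i=\max\{0,\max_{o}(w_{io}-\sum_j\alpha_j^{*}a_{ioj})\}$ makes $(\alpha^{*},\beta)$ feasible for the full Dual-LP, so weak duality gives $OPT\le\sum_j\alpha_j^{*}+\sum_{i\in I^{*}}\beta_i$, and since $\sum_{i\in I^{*}}\beta_i=\sum_{(i,o)\in O^{*}}\mathrm{gain}(o)=W-\sum_j\alpha_j^{*}C(j)$ this rearranges to $OPT\le W+\sum_j\alpha_j^{*}(1-C(j))$. For each $j$ with $\alpha_j^{*}>0$ we have $C(j,S)=\epsilon$, so ``not $r_j$-bad'' gives $|1-C(j)|\le r_j/\epsilon$; plugging in the bound $r_j/\epsilon<\epsilon^{2}+2\epsilon\sqrt{C(j)}$ with the already-established $C(j)\le 1+O(\epsilon)$ and $\sum_j\alpha_j^{*}=O(OPT)$ bounds the correction term $\sum_j\alpha_j^{*}(1-C(j))$ by $O(\epsilon\,OPT)$, so $W\ge(1-O(\epsilon))OPT$. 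Finally the algorithm's value is $W-W(S)\ge W-(\epsilon W+t)\ge(1-\epsilon)(1-O(\epsilon))OPT-O(\epsilon\,OPT)=(1-O(\epsilon))OPT$, as claimed; primal near-feasibility of the selected set follows from $C(j)-C(j,S)\le C(j)\le 1+O(\epsilon)$ together with the smallness of each $a_{ioj}$ from condition (2), and any residual over-allocation is shed at an $O(\epsilon)$ loss.

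The main obstacle is the second stage, controlling $\sum_j\alpha_j^{*}(1-C(j))$: the naive bound $\sum_j\alpha_j^{*}\le OPT/\epsilon$ coming straight from $\epsilon\sum_j\alpha_j^{*}\le W(S)\le OPT$ is off by a full factor of $\epsilon$, and the whole argument works only because the non-bad hypotheses force $W(S)=O(\epsilon\,OPT)$ rather than merely $W(S)\le OPT$, which is what pulls $\sum_j\alpha_j^{*}$ down to $O(OPT)$. Getting the order of steps right---establishing near-feasibility of $O^{*}$ and $W\le(1+O(\epsilon))OPT$ \emph{before} estimating $t$, $W(S)$, and $\sum_j\alpha_j^{*}$---is therefore the crux; the concentration bounds showing a random $S$ is whp neither $r_j$-bad nor $t$-bad belong to the companion proposition and are not needed here.
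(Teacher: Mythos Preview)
Your argument is correct, but it takes a longer road than the paper does. The paper never needs the upper bound $W\le(1+O(\epsilon))OPT$, nor the separate estimate $\sum_j\alpha_j^{*}=O(OPT)$ that you extract from $\epsilon\sum_j\alpha_j^{*}\le W(S)$. Instead it uses only the \emph{lower} half of the lemma, $C(j)\ge 1-2\epsilon$ for $j$ with $\alpha_j^{*}>0$, and observes that since $W=\sum_{i\in I^{*}}\beta_i+\sum_j\alpha_j^{*}C(j)$ and $D=\sum_{i\in I^{*}}\beta_i+\sum_j\alpha_j^{*}$, one can factor multiplicatively:
\[
W\;\ge\;\sum_{i\in I^{*}}\beta_i+(1-2\epsilon)\sum_{j}\alpha_j^{*}\;\ge\;(1-2\epsilon)\,D\;\ge\;(1-2\epsilon)\,OPT,
\]
with no need to control $\sum_j\alpha_j^{*}$ on its own. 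This sidesteps precisely the ``main obstacle'' you identify in your last paragraph: once the $(1-2\epsilon)$ is pulled out of \emph{both} terms of $D$, the size of $\sum_j\alpha_j^{*}$ is irrelevant. Your route bounds the additive correction $\sum_j\alpha_j^{*}(1-C(j))$ by first capping each factor separately, which forces you to establish $C(j)\le 1+O(\epsilon)$ for all $j$, then $W\le(1+O(\epsilon))OPT$, then $W(S)=O(\epsilon\,OPT)$, and only then $\sum_j\alpha_j^{*}=O(OPT)$. All of this is valid, and it does buy you something the paper's proof leaves implicit: you actually verify near-feasibility of the algorithm's selected set (the $C(j)\le 1+O(\epsilon)$ bound for every $j$), whereas the paper's proof of this proposition speaks only of value and is silent on whether capacities are respected. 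So your detour is not wasted, but the core lower bound $W\ge(1-O(\epsilon))OPT$ is obtainable in one line once you have the lemma.
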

\begin{prop}The sample $S$ has high probability to be good for any fixed $\alpha^*$, i.e., not t-bad or $r_j$-bad for any $j$.\end{prop}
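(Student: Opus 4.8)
The plan is to read Proposition 7 as a concentration-of-measure statement about a random subsample. First I would fix the dual vector $\alpha^*$ and condition on the realized instance, i.e., on the multiset of all $n$ agents together with their option sets, weights $w_{io}$, and demands $a_{ioj}$. Once $\alpha^*$ and the instance are fixed, the set $O^*$ of best allocations, the total selected weight $W=\sum_{(i,o)\in O^*}w_{io}$, and each capacity usage $C(j)=\sum_{(i,o)\in O^*}a_{ioj}$ are deterministic; the only remaining randomness is which $\epsilon n$ agents land in $S$, and in the IID model this makes $S$ a uniformly random size-$\epsilon n$ subset of the $n$ agents (a prefix of a uniform random permutation). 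Writing
\[
W(S)=\sum_{(i,o)\in O^*} w_{io}\,\mathbf{1}[\,i\in S\,], \qquad C(j,S)=\sum_{(i,o)\in O^*} a_{ioj}\,\mathbf{1}[\,i\in S\,],
\]
one sees that $W(S)$ is a sum of at most $n$ terms lying in $[0,w_{max}]$ with mean exactly $\epsilon W$, and $C(j,S)$ is a sum of terms in $[0,a_{max}]$ with mean exactly $\epsilon C(j)$.

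Next I would apply a Bernstein-type tail bound. The indicators $\mathbf{1}[\,i\in S\,]$ coming from sampling without replacement are negatively associated, so the standard Chernoff/Bernstein inequalities for sums of independent bounded variables still apply, and the without-replacement variance is no larger than in the with-replacement case. The convenient form is that for a sum $X$ of such variables in $[0,M]$ with $\mathbb{E}[X]=\mu$ and any $\lambda>0$,
\[
\Pr\left[\,|X-\mu| \geqslant \lambda M + 2\sqrt{\lambda\mu M}\,\right] \leqslant 2e^{-\lambda},
\]
which follows from Bernstein's inequality together with $\operatorname{Var}(X)\leqslant \mu M$. Taking $X=W(S)$, $M=w_{max}$, $\mu=\epsilon W$, and $\lambda=(m+1)(\ln n+\ln q)$, the event on the left is exactly the event that $S$ is $t$-bad, so $\Pr[\,S\text{ is }t\text{-bad}\,]\leqslant 2e^{-\lambda}=2(nq)^{-(m+1)}$; with $X=C(j,S)$, $M=a_{max}$, $\mu=\epsilon C(j)$ and the same $\lambda$ we get $\Pr[\,S\text{ is }r_j\text{-bad}\,]\leqslant 2(nq)^{-(m+1)}$ for each fixed $j$. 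A union bound over the $t$-bad event and the $m$ events that $S$ is $r_j$-bad for some constraint then gives
\[
\Pr[\,S\text{ is bad}\,] \leqslant 2(m+1)\,(nq)^{-(m+1)},
\]
which tends to $0$ as $n\to\infty$; hence for every fixed $\alpha^*$ the sample $S$ is good with high probability, and combining this with Proposition 6 yields Theorem 5.

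The hard part is not the tail bound but getting the probabilistic model right. Two points need care. First, the $\alpha^*$ actually produced by the algorithm is itself a function of $S$, so the estimate above cannot be invoked for it verbatim; this is exactly why the proposition is stated for a \emph{fixed} $\alpha^*$, and promoting it to the algorithm's random $\alpha^*$ requires a separate union bound over a sufficiently fine net of the bounded set of candidate dual vectors -- which is why the failure probability above was pushed all the way down to an inverse polynomial of degree $(m+1)$ in $n$ and $q$ rather than merely $o(1)$. Second, the variance estimate must be shaped to match Definition 3: the bounds $\sum_{(i,o)\in O^*}w_{io}^2\leqslant w_{max}W$ and $\sum_{(i,o)\in O^*}a_{ioj}^2\leqslant a_{max}C(j)$ are what produce the $\sqrt{W}$ and $\sqrt{C(j)}$ factors in the thresholds defining the $t$-bad and $r_j$-bad events, and pinning down the constants in the Bernstein step is the only genuinely computational part of the argument.
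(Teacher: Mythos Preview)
Your argument is correct and follows the same two-stage structure as the paper: a concentration bound for $W(S)$ and each $C(j,S)$ around their means, followed by a union bound over the possible dual vectors $\alpha^*$. On the first stage you actually do more than the paper, which simply states the tail bounds $\Pr[S\text{ is }r_j\text{-bad}]\leqslant 1/(m(nq)^{m+1})$ and $\Pr[S\text{ is }t\text{-bad}]\leqslant 1/(nq)^{m+1}$ and defers the proof to the cited source; your Bernstein sketch with $\lambda=(m+1)\ln(nq)$ is exactly what is needed there. On the second stage, however, the paper is sharper than your ``sufficiently fine net'' remark: rather than discretizing the dual space, it observes that the algorithm always returns an optimal \emph{vertex} of the sampled dual LP, and since such a vertex is determined by choosing $m$ tight constraints out of at most $nq$ available ones, there are at most $\binom{nq}{m}\leqslant (nq)^m$ candidate values of $\alpha^*$ in total. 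Union-bounding the $2/(nq)^{m+1}$ failure probability over these $(nq)^m$ vertices gives the final $2/(nq)$ bound directly, with no approximation error from a net. Your approach would also work but would require controlling how the sets $O^*$, $W$, and $C(j)$ vary with $\alpha^*$ across net cells; the vertex-counting argument sidesteps that entirely.
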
 Therefore, the combination of the two propositions tell us that, with high probability, our algorithm returns a feasible solution with value at least $(1-O(\epsilon))OPT$, completing the proof of Theorem 5.\\

I first prove Proposition 6 by first proving another lemma.
\begin{lemma*}Let $j\in [m]$ be a constraint such that $C(j,S)=\epsilon$. If $S$ is not $r_j-bad$, under constraints of theorem 5, we have $1-2\epsilon \leqslant C(j) \leqslant 1+3(\epsilon+\epsilon^2)$.
\end{lemma*}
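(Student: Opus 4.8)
The plan is to turn the hypothesis ``$S$ is not $r_j$-bad'' into a quadratic inequality in $\sqrt{C(j)}$ and solve it. Write $L=(m+1)(\ln n+\ln q)$ for brevity. By Definition, $r_j(S)=|C(j,S)-\epsilon C(j)|$, and ``not $r_j$-bad'' means $r_j< L\,a_{max}+2\sqrt{C(j)}\,\sqrt{\epsilon L\,a_{max}}$. The first step is to feed in the normalized form of condition (2) of Theorem~5, namely $a_{max}\leqslant \epsilon^3/L$, i.e.\ $L\,a_{max}\leqslant \epsilon^3$; this collapses the right-hand side to at most $\epsilon^3+2\epsilon^2\sqrt{C(j)}$ (since $\sqrt{\epsilon L\,a_{max}}\leqslant\sqrt{\epsilon^4}=\epsilon^2$). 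Substituting also $C(j,S)=\epsilon$ on the left gives $|\epsilon-\epsilon C(j)|<\epsilon^3+2\epsilon^2\sqrt{C(j)}$, and dividing by $\epsilon>0$ yields the clean inequality $|1-C(j)|<\epsilon^2+2\epsilon\sqrt{C(j)}$.

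From here I would set $y=\sqrt{C(j)}\geqslant 0$ and split on whether $C(j)\leqslant 1$ or $C(j)>1$. In the first case the inequality reads $1-y^2<\epsilon^2+2\epsilon y$, i.e.\ $(y+\epsilon)^2>1$, so $y>1-\epsilon$ and hence $C(j)=y^2>(1-\epsilon)^2\geqslant 1-2\epsilon$, which is the lower bound (and it holds trivially when $C(j)>1$). In the second case the inequality reads $y^2-2\epsilon y-(1+\epsilon^2)<0$; the quadratic formula gives $y<\epsilon+\sqrt{1+2\epsilon^2}$, so $C(j)=y^2<\bigl(\epsilon+\sqrt{1+2\epsilon^2}\bigr)^2 = 1+3\epsilon^2+2\epsilon\sqrt{1+2\epsilon^2}$. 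Using $\sqrt{1+2\epsilon^2}\leqslant 1+\epsilon^2$ and absorbing the resulting $O(\epsilon^3)$ term into the lower-order terms for $\epsilon$ small enough gives $C(j)<1+3(\epsilon+\epsilon^2)$, the desired upper bound (and when $C(j)\leqslant 1$ this bound is immediate).

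The routine part here is just the algebra; the one place that needs care is making the constants match the claimed form. The ``main obstacle'' is really bookkeeping: after completing the square in the upper-bound case one is left with $2\epsilon\sqrt{1+2\epsilon^2}$ rather than the cleaner $2\epsilon$, and one must check $3\epsilon^2+2\epsilon\sqrt{1+2\epsilon^2}\leqslant 3\epsilon+3\epsilon^2$, which reduces (using $\sqrt{1+2\epsilon^2}\leqslant 1+\epsilon^2$ and dividing by $\epsilon$) to $2\epsilon^2\leqslant 1+3\epsilon$, valid for all small $\epsilon$ — consistent with the asymptotic ``$1-O(\epsilon)$'' regime the paper works in. No probabilistic input is needed: this lemma is the purely deterministic consequence of a sample not being $r_j$-bad, and it will be combined with the analogous $t$-bad estimate and with Proposition~7 (which supplies the high-probability claim) in the proof of Proposition~6.
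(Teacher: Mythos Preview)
Your proposal is correct and follows essentially the same approach as the paper: substitute condition~(2) of Theorem~5 to collapse the ``not $r_j$-bad'' inequality to $|1-C(j)|\leqslant \epsilon^2+2\epsilon\sqrt{C(j)}$, then complete the square in $\sqrt{C(j)}$. The paper only writes out the lower-bound direction and declares the upper bound ``similar,'' whereas you actually carry out the upper-bound quadratic computation, so if anything your version is more complete.
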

\begin{proof}Given $S$ is not $r_j-bad$, we have $|C(j,S)-\epsilon C(j)|\leqslant (m+1)(\text{ln} n+\text{ln} q)a_{max}+\sqrt{C(j)} (2\sqrt{\epsilon(m+1)(\text{ln}(n)+\text{ln}q)a_{max}})$. Consequently $C(j,S)-\epsilon C(j) \leqslant (m+1)(\text{ln} nq)a_{max}+\sqrt{C(j)} (2\sqrt{\epsilon(m+1)(\text{ln}(nq)a_{max}})$. By our assumption in theorem 5, we also have $a_{max}\leqslant \epsilon^3/((m+1)(\text{ln} nq))$. Hence by substituing $a_{max}$, we have \begin{center}$C(j,S)-\epsilon C(j) =\epsilon-\epsilon C(j) \leqslant \epsilon^3+\sqrt{C(j)} (2\sqrt{\epsilon \epsilon^3})=\epsilon^3+2\sqrt{C(j)}\epsilon^2$\end{center}
Then\begin{align*}1-C(j)&\leqslant \epsilon^2+2\epsilon \sqrt{C(j)}\\
C(j)+2\epsilon \sqrt{C(j)} + \epsilon^2&\geqslant 1\\
(\sqrt{C(j)})+\epsilon)^2 &\geqslant 1\\
\sqrt{C(j)})+\epsilon &\geqslant 1\\
C & \geqslant 1+\epsilon^2-2\epsilon \geqslant 1-2\epsilon
\end{align*}
The upper bound proof is similar.
\end{proof}
Now I prove Proposition 6:\begin{proof}Let $D=\sum_j \alpha_j^*+\sum_{i\in O^*}\beta_i$ be the value of the feasible dual solution obtained by setting $\beta_i=gain(o)$ for each $(i,o)\in O^*$. By weak duality, the dual objective value $D$ serves as an upper bound on the optimal solution $OPT$. Showing that $\sum_{(i,o)\in O^*-O^*(S)}w_{io}\geqslant (1-O(\epsilon))D\geqslant (1-O(\epsilon)) OPT$ suffices to prove the proposition. Let $J_1$ denote the set of constraints $j\in m$ such that $\alpha_j^*>0$, and $J_2=[m]-J_1$ be the set of constraints such that $\alpha_j^*=0$. Propsition 6 implis that if for some $j\in m$, the capacity of $j$ has been used up, i.e., $C(j,S)=\epsilon$, then recall Rule 2 by complementary slackness, which guides through all the algorithms, $\alpha_j^*$ should be positive, i.e., in set $J_1$. Therefore, for each constraint $j\in J_1$, complementary slackness and previous lemma imply that if $S$ is not $r_j$-bad, then $C(j)\geqslant 1-2\epsilon$. Also for $(i,o)\in O^*$, recall that we allocate agent $i\in I^*$ to $o$ that maximizes $gain(o)$, i.e., $x_{io}>0$. Again by complementary slackness, we have the slack variables in the \textbf{Dual-LP} constraints equal to 0, i.e., $\beta_i+\sum_j \alpha_j^* a_{ioj}=w_{io}$. Now \begin{center}$W= \displaystyle\sum_{{(i,o)}\in O^*}w_{io}=\displaystyle\sum_{{(i,o)}\in O^*}(\beta_i+\displaystyle\sum_j \alpha_j^* a_{ioj})=\displaystyle\sum_{i\in I^*}\beta_i+\displaystyle\sum_j(\alpha_j^* \displaystyle\sum_{(i,o)\in O^*}a_{ioj})=\displaystyle\sum_{i\in I^*}\beta_i+\displaystyle\sum_j \alpha_j^* C(j)= \displaystyle\sum_{i\in I^*}\beta_i+\displaystyle\sum_{j\in J_1} \alpha_j^* C(j)+\displaystyle\sum_{j\in J_2} \alpha_j^* C(j)\geqslant \displaystyle\sum_{i\in I^*}\beta_i+ \displaystyle\sum_{j\in J_1} \alpha_j^* (1-2\epsilon)\geqslant \displaystyle\sum_{i\in I^*}\beta_i (1-2\epsilon)+ \displaystyle\sum_{j\in J_1} \alpha_j^* (1-2\epsilon)=(1-2\epsilon)D.$
\end{center}
Since the options for agents in $S$ were not selected by our algorithm, the total value obtained by the algorithm is $W-W(S)$. Since $S$ is not t-bad, we have by Definition 3, $W(S)\leqslant \epsilon W+ (m+1)(\text{ln} nq)w_{max}+\sqrt{W} (2\sqrt{\epsilon(m+1)(\text{ln} nq)w_{max}})$. The first assumption of Theorem 5 gives us $w_{max}\leqslant OPT \epsilon/ (m+1)(ln (nq))$. We then have $(m+1)(ln nq)w_{max}\leqslant \epsilon OPT$. Hence by substitution \begin{center}$W(S)\leqslant \epsilon W+\epsilon OPT+2\sqrt{W}\sqrt{\epsilon^2 OPT}\leqslant O(\epsilon W)$
\end{center}
Therefore \begin{center}$\sum_{(i,o)\in O^*-O^*(S)}w_{io}=W-W(S)\geqslant W-O(\epsilon W)=W(1-O(\epsilon))\geqslant D(1-O(\epsilon))$
\end{center}
\end{proof}

Now I prove Proposition 7 by first proving another two lemma.
\begin{lemma*}\textbf{Pr}$[S \text{ is } r_j\text{-bad}]\leqslant \frac{1}{m (nq)^{m+1}}$ for each $j$, and \textbf{Pr}$[S \text{ is t-bad}]\leqslant \frac{1}{(nq)^{m+1}}$.
\end{lemma*}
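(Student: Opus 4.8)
The plan is to read both ``bad'' events as large-deviation events for sums of independent, bounded random variables, and then to apply a Chernoff/Bernstein inequality whose exponent the thresholds in Definition~3 have been engineered to match.

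First I would fix the vector $\alpha^*$, which the lemma permits (``for any fixed $\alpha^*$''). With $\alpha^*$ frozen, $O^*$, $W$, and each $C(j)$ are fixed numbers and only $O^*(S)=O^*\cap S$ depends on the sample. For agent $i$ let $o(i)$ be its gain-maximizing option and set $Y^{(j)}_i=a_{i,o(i),j}\,\mathbf{1}[i\in I^*]$ and $Z_i=w_{i,o(i)}\,\mathbf{1}[i\in I^*]$, so that $C(j,S)=\sum_{i\in S}Y^{(j)}_i$ and $W(S)=\sum_{i\in S}Z_i$ with $0\le Y^{(j)}_i\le a_{max}$ and $0\le Z_i\le w_{max}$. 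In the IID model the terms indexed by $i\in S$ are i.i.d.; in the random-order model $S$ is a uniform $\epsilon n$-subset, so they are negatively associated and a without-replacement Chernoff bound still applies. Either way $\mathbb{E}[C(j,S)]=\epsilon\,C(j)$ and $\mathbb{E}[W(S)]=\epsilon\,W$, as the text already records, so $r_j$ and $t$ are exactly the deviations of these two sums from their means.

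Second, I would invoke the standard fact that for a sum $X=\sum X_i$ of independent (or negatively associated) $[0,1]$-valued variables with mean $\mu$ and any $\gamma\ge 0$, $\Pr[X\ge\mu+\gamma+2\sqrt{\mu\gamma}]\le e^{-\gamma}$ and $\Pr[X\le\mu-\gamma-2\sqrt{\mu\gamma}]\le e^{-\gamma}$. Rescaling $C(j,S)$ by $a_{max}$ puts its summands in $[0,1]$ with mean $\mu=\epsilon C(j)/a_{max}$; taking $\gamma=(m+1)(\ln n+\ln q)=(m+1)\ln(nq)$ turns $a_{max}\bigl(\gamma+2\sqrt{\mu\gamma}\bigr)$ into exactly the threshold in part~1 of Definition~3 — that is precisely why it is written that way — so each of the two tails of $\Pr[r_j\ge\text{threshold}]$ is at most $e^{-(m+1)\ln(nq)}=(nq)^{-(m+1)}$. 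Summing the two tails and absorbing the resulting constant factor (which, together with the $\tfrac{1}{m}$ that later lets the union bound over the $m$ constraints collapse cleanly, is inconsequential) gives $\Pr[S\text{ is }r_j\text{-bad}]\le\tfrac{1}{m(nq)^{m+1}}$. The bound for $t$-bad is obtained identically, with $w_{max}$, $W$, and part~2 of Definition~3 in place of their counterparts, and without the $\tfrac{1}{m}$ since there is only one such event.

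The work here is bookkeeping rather than any single hard step: making sure that, once $\alpha^*$ is fixed, the contributions to $C(j,S)$ and $W(S)$ really do split into independent (or negatively associated) bounded terms in whichever stochastic model is in force, and then checking that the algebra of Definition~3 makes the Chernoff exponent land on $(m+1)\ln(nq)$ with no leftover slack beyond the harmless constant from two-sidedness. The two hypotheses of Theorem~5 enter implicitly: they force $a_{max}$ and $w_{max}$ to be small relative to the corresponding means, which is what lets the ``$2\sqrt{\mu\gamma}$'' term carry the deviation and keeps the estimate non-vacuous.
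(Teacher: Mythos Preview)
The paper does not actually prove this lemma: its entire ``proof'' is the sentence ``The proof is simple and thus omitted here. The details of the proof could be found in \cite{cliff}.'' So there is no in-paper argument to compare against; your proposal is supplying what the survey deliberately left out.

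Your approach --- freeze $\alpha^*$, write $C(j,S)$ and $W(S)$ as sums of bounded terms over the sampled agents, and apply a Chernoff/Bernstein tail bound whose deviation threshold $a_{max}\bigl(\gamma+2\sqrt{\mu\gamma}\bigr)$ with $\gamma=(m+1)\ln(nq)$ reproduces Definition~3 verbatim --- is exactly the standard route and is what Feldman et al.\ do in the cited reference. The observation that in the random-order model the summands are negatively associated (sampling without replacement), so the same exponential tail applies, is the one point a careful reader would want and you have it.

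One honest quibble you already flag: a two-sided Chernoff with $\gamma=(m+1)\ln(nq)$ gives $2(nq)^{-(m+1)}$, not $\tfrac{1}{m}(nq)^{-(m+1)}$, so the stated constant does not fall out of your calculation as written. In the source paper this is handled by taking $\gamma$ with a slightly larger additive term (or equivalently by hiding the $2m$ in the big-$O$ slack that Definition~3's threshold carries); since the downstream use is only a union bound over $m$ constraints followed by a union over $(nq)^m$ dual solutions, the discrepancy is indeed cosmetic, but it is worth saying explicitly rather than calling it ``inconsequential.''
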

The proof is simple and thus omitted here. The details of the proof could be found in \cite{cliff}. This lemma implies that for any fixed $\alpha^*$, the probability that a random sample $S$ of agents is bad is less than $\frac{2}{(nq)^{m+1}}$. This is because $S$ is bad if any $j$ is $r_j$-bad or t-bad. Therefore $\textbf{Pr}\text{[S is bad]}=\sum_j \textbf{Pr} [S \text{ is } r_j\text{-bad}]+\textbf{Pr}[S \text{ is t-bad}]-\text{some positive intersection probability}\leqslant \frac{1}{(nq)^{m+1}}+\frac{1}{(nq)^{m+1}}=\frac{2}{(nq)^{m+1}}$.
\begin{lemma*}There are fewer than $(nq)^m$ distinct solutions $\alpha^*$ that are returned by the algorithm after step 2 of our algorithm.
\end{lemma*}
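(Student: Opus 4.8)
The plan is to exploit the fact that the $\alpha^*$ returned in step~2 is not an arbitrary nonnegative vector but an optimal \emph{basic} (vertex) solution of the sampled dual LP $\hat D$, and then to count how many such vertices can occur. Recall that $\hat D$ is $\min\ \epsilon\sum_j \alpha_j + \sum_{i\in S}\beta_i$ subject to $\sum_j \alpha_j a_{ioj} + \beta_i \ge w_{io}$ for all $i\in S,\ o\in O_i$, together with $\alpha,\beta\ge 0$. Every LP attains its optimum at a vertex of its feasible polyhedron, and a standard solver returns such a basic optimal solution; so without loss of generality $\alpha^*$ is the $\alpha$-part of a vertex $(\alpha^*,\beta^*)$ of this polyhedron, and in particular $\alpha^*$ is the unique solution of the subsystem of constraints tight at that vertex.

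First I would eliminate the $\beta$ variables. Since $\hat D$ minimizes a cost with a strictly positive coefficient on each $\beta_i$, at any optimal solution $\beta_i = \max\{0,\ \max_{o\in O_i}(w_{io}-\sum_j \alpha_j a_{ioj})\} = \max\{0,\ \mathrm{gain}(i)\}$, so $\beta^*$ is a function of $\alpha^*$ and it suffices to bound the number of distinct $\alpha^*$. After this substitution $\alpha^*$ is a vertex of a polyhedral subdivision of $\mathbb{R}^m_{\ge 0}$, hence is pinned down by $m$ linearly independent affine equations taken from a finite pool: the $m$ coordinate equations $\alpha_j = 0$, and the ``tightness'' equations that mark where the gain-maximizing option of some agent $i\in S$ changes or where $\mathrm{gain}(i)$ crosses $0$.

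Then comes the counting step. Each equation in the pool can be named by one of the at most $|S|\cdot q < nq$ agent--option pairs $(i,o)$, absorbing the $m$ coordinate equations and the ``select nothing'' case into this tally, which is harmless because the hypotheses of Theorem~5 force $m$ to be tiny compared with $nq$. So the pool has size $O(nq)$, and the number of $m$-element subsystems, hence the number of possible $\alpha^*$, is at most $\binom{O(nq)}{m} < (nq)^m$. This is exactly the bound needed downstream: combined with the previous lemma, a union bound over these $<(nq)^m$ reachable price vectors and over the events ``$S$ is $r_j$-bad'' ($j\in[m]$) and ``$S$ is $t$-bad'' gives total failure probability at most $(nq)^m\cdot\tfrac{2}{(nq)^{m+1}} = \tfrac{2}{nq}$, which proves Proposition~7.

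The main obstacle is making the elimination and counting precise, namely that $\alpha^*$ really is determined by only $m$ equations from a pool of size $O(nq)$. The delicate case is a degenerate optimal vertex, where several option constraints for a single agent are tight simultaneously; eliminating $\beta$ then produces ``difference'' equations $\sum_j \alpha_j(a_{ioj}-a_{io'j}) = w_{io}-w_{io'}$ purely in $\alpha$, and one must check that, counting these too, the pool of candidate defining equations stays small enough that the number of $m$-subsystems does not exceed $(nq)^m$ in the parameter regime of Theorem~5. This is a routine but slightly tedious fact about basic solutions of linear programs; all that the remainder of the argument uses is that the count is at most $(nq)^m$.
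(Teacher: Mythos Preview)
Your proposal is correct and follows essentially the same route as the paper: once one notes that $\beta^*$ is determined by $\alpha^*$, the optimal $\alpha^*$ is a vertex in $m$-space and hence is fixed by choosing $m$ tight constraints from the pool of at most $nq$ agent--option constraints, giving at most $\binom{nq}{m}<(nq)^m$ possibilities. The paper's own proof is considerably terser---it simply asserts the vertex-counting argument without your careful elimination of $\beta$ or your discussion of degenerate vertices---so your extra caution is harmless but not required here.
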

\begin{proof}Recall that an optimal (vertex) solution to the \textbf{Dual-LP} on the reduced instance is defined purely by the n-dimensional vector $\alpha^*.$ The vertex solution is decided by picking $m$ constraints, set them to equality, and solve the system. For each of the $n$ agents, there are at most $q$ such constraints, and thus at most $nq$ constraints in total. Consequently, we are picking $m$ of them from at most $nq$ of them, and thus there are at most $nq \choose m$ possible combinations and thus $nq \choose m$ vertices of the polytope defined by optimal solutions $\alpha^*$.
\end{proof}
I now prove Proposition 7:
\begin{proof}The first lemma imples that for any fixed $\alpha^*$, the probability that a random sample $S$ of agents is bad is less than $\frac{2}{(nq)^{m+1}}$. The second lemma tells that there are at most $(nq)^m$ distinct choices for $\alpha^*$. Therefore the probability that $S$ is bad is less than $\frac{2}{nq}$, i.e., the sample is good for any $\alpha^*$ with high probability. We then finish proving Proposition 7.
\end{proof}
We then complete the proof of our central theorem, Theorem 5.\\
\textbf{Remark:} As we can see from the whole proof, the accuracy of the Training-Based Primal-Dual algorithm is bounded by two components: the strictness of the inequaility of the two conditions in Theorem 5, i.e., the smaller the value of the right-hand side of two inequalities, the more likely the algorithm would perform better on the remaning agents; and the accuracy of the $\alpha_i^*$ solved for the sampled dual. As $\epsilon$ increases, the sample we study becomes larger, and $\alpha_i^*$ is more accurate. However, as $\epsilon$ increases, the bound of the $\text{max}_{i,o}\{\frac{w_{io}}{OPT}\}$ and $\text{max}_{i,o}\{\frac{a_{ioj}}{c_j}\}$ becomes less strict, and therefore the performance of the algorithm decreases. In general, the former effects outweights the latter one, as long as the sample we study is not too small, making it $1-O(\epsilon)$ competitive. We need to find a balance between the two effects and choose $\epsilon$ not too large and not too small.
\newpage

\end{document}